\tikzset{main node/.style={circle,fill=blue!15,draw,minimum size=.5cm,inner sep=0pt},
}
\newtheorem{theorem}{Theorem}[section]
\newtheorem{lemma}[theorem]{Lemma}
\newtheorem{proposition}[theorem]{Proposition}
\newtheorem{corollary}[theorem]{Corollary}
\theoremstyle{definition}
\newtheorem{definition}[theorem]{Definition} 
\newtheorem{remark}[theorem]{Remark}
\newtheorem{example}[theorem]{Example}
\newcommand{\lr}[1]{\left\langle#1\right\rangle}
\newcommand{\mm}[0]{\mathfrak{m}}
\newcommand{\K}[0]{\mathbb{K}}
\newcommand{\f}{\mathcal{F}}
\DeclareMathOperator{\dep}{depth}
\DeclareMathOperator{\reg}{reg}
\DeclareMathOperator{\pdim}{pdim}
\DeclareMathOperator{\bight}{bight}
\DeclareMathOperator{\link}{link}
\DeclareMathOperator{\del}{del}
\DeclareMathOperator{\dis}{dis}
\title{Sequentially Cohen--Macaulay Co-Chordal Graphs: Structure and Projective Dimension}
\date{
}
\author{Chwas Ahmed, Amir Mafi and Mohammed Rafiq Namiq*}
\address{Chwas Ahmed, Department of Mathematics, College of Science, University of Sulaimani, Kurdistan Region, Iraq.}
\email{chwas.ahmed@univsul.edu.iq}
\address{A. Mafi, Department of Mathematics, University of Kurdistan, P.O. Box: 416, Sanandaj, Iran.}
\email{a\_mafi@ipm.ir}
\address{Mohammed Rafiq Namiq, Department of Mathematics, College of Science, University of Sulaimani, Kurdistan Region, Iraq.}
\email{mohammed.namiq@univsul.edu.iq}
\subjclass[2020]{Primary 05C75, 13D02; Secondary 05E40, 13F55, 05C69.}
\keywords{Sequentially Cohen--Macaulay graphs, Co-chordal graphs, Chordal graphs, Projective dimension, Maximum degree, $d$-tree, Simplicial complex, Edge ideal, Graph classification.\\
* Corresponding author}
\begin{document}

\begingroup
\def\uppercasenonmath#1{} 
\let\MakeUppercase\relax 
\maketitle
\endgroup


 
 \sloppy
\begin{abstract}
	We introduce a class of chordal graphs called $(d_1,d_2,\dots,d_q)$-trees. A graph belongs to this class if and only if its clique complex is sequentially Cohen--Macaulay, providing a complete classification of all sequentially Cohen--Macaulay co-chordal graphs. This class also yields a classification of bi-sequentially Cohen--Macaulay graphs. We study the relationship between the projective dimension of a graph and its maximum vertex degree. We show that the projective dimension is always at least the maximum vertex degree, although this bound is not always tight, even for co-chordal graphs. However, equality holds when the graph is sequentially Cohen--Macaulay co-chordal or has a full vertex.


\end{abstract}


\section{Introduction}
Let $G$ be a finite simple graph with vertex set $V(G)=\{x_1,\dots,x_n\}$ and edge set $E(G)=\bigl\{\{x_i,x_j\}\subseteq V(G)\mid x_i\mbox{ is adjacent to }x_j\bigr\}$. We associate to $G$ its edge ideal
$$I(G)=\bigl(x_ix_j\mid\{x_i,x_j\}\in E(G)\bigr)\subseteq R=\K[x_1,\dots,x_n],$$
where $\K$ is a field \cite{Villarreal1990}. A graph $G$ is called Cohen--Macaulay (resp. sequentially Cohen--Macaulay) if the quotient ring $R/I(G)$ has the corresponding property.

A central class of graphs in combinatorial commutative algebra is the class of chordal graphs, which are characterized by the absence of induced cycles of length four or more. Chordal graphs have many remarkable properties, including the fact that all chordal graphs are sequentially Cohen--Macaulay \cite{FranciscoTuyl2007}. In contrast, not every co-chordal graph, a graph whose complement is chordal, is sequentially Cohen--Macaulay, motivating the natural question: which co-chordal graphs possess this property?

Fröberg \cite{Froberg1990} partially addressed this question by introducing $d$-trees, a subclass of chordal graphs, and showing that the complement of a $d$-tree graph is Cohen--Macaulay. To provide a complete answer, we introduce a new class of graphs called $(d_1,d_2,\dots,d_q)$-trees, defined in terms of a non-increasing sequence of positive integers $(d_1,d_2,\dots,d_q)$. We show that every $d$-tree graph is $(d_1,d_2,\dots,d_q)$-tree graph, and every $(d_1,d_2,\dots,d_q)$-tree graph is chordal, that is,
$$d\mbox{-tree graphs }\Rightarrow(d_1,d_2,\dots,d_q)\mbox{-tree graphs }\Rightarrow\mbox{ chordal graphs}.$$
while the converse implications may fail in general. In Theorem \ref{d-tree-VD}, we show that a co-chordal graph $G$ is sequentially Cohen--Macaulay if and only if its complement is a $(d_1,d_2,\dots,d_q)$-tree graph for some sequence $(d_1,d_2,\dots,d_q)$. This provides a practical and effective criterion for identifying sequentially Cohen--Macaulay co-chordal graphs.

Next, we study the relationship between the projective dimension of a graph $G$ and its maximum vertex degree, defined as the largest degree of any vertex in $G$. We show that the projective dimension of $G$ is always bounded above by its maximum degree (Theorem \ref{pdim-max}). While the difference between projective dimension and maximum degree can be arbitrarily large in general, this bound is sharp for certain families of graphs, including those with a full vertex or whose complement is a $(d_1,d_2,\dots,d_q)$-tree (Theorems \ref{pdim-G1-empty} and \ref{d-tree-pdim}).

In the direction of this work, Gitler and Valencia conjectured in {\cite[Conjecture 4.13]{GitlerValencia2005}} that for any connected graph $G$ whose complement is chordal, the projective dimension of $G$ is equal to its maximum degree. Our results show that the conjecture does not hold in general, even though the difference between the projective dimension and maximum degree may not be bounded, (see Section \ref{section}). There have been some attempts to prove the conjecture for some special cases. Gitler and Valencia {\cite[Theorem 4.14]{GitlerValencia2005}} showed that the conjecture holds for some graphs in a class of graphs in which the graph and its complement are both chordal. These graphs are also  $(d_1,d_2,\dots,d_q)$-trees. Similarly, Moradi and Kiani {\cite[Theorem 1.1]{MoradiKiani2010}} proved the conjecture when the complement of $G$ is a $d$-tree. Therefore, our Theorem \ref{pdim-G1-empty} extends these results by showing that the conjecture holds for a broader class of graphs, namely those whose complement is a $(d_1,d_2,\dots,d_q)$-tree.


\section{Preliminaries}
A \emph{simplicial complex} $\Delta$ on the vertex set $V=\{x_1,\ldots,x_n\}$ is a collection of subsets of $V$ such that
\begin{enumerate}
	\item[i)] $\{x_i\}\in\Delta$ for every $1\leq i\leq n$ and
	\item[ii)] if $F\in\Delta$ and $H\subseteq F$, then $H\in\Delta$. 
\end{enumerate}
An element $F$ of $\Delta$ is  a \emph{face} of $\Delta$ and a maximal (with respect to inclusion) face is a \emph{facet}. The set of all facets of $\Delta$ is denoted by $\f(\Delta)$ and we sometimes write $\lr{F\mid F\in\f(\Delta)}$ for $\Delta$. A simplicial complex $\Delta$ is called \emph{pure} if the facets have the same cardinality. Let $d=\max\{|F|\mid F\in\Delta\}$, the \emph{dimension} of $\Delta$ is $\dim\Delta=d-1$. The \emph{Stanley--Reisner ideal} of $\Delta$ is $$I_\Delta=\lr{x_F\mid F\notin\Delta}$$ 
where $x_F=\prod_{x_i\in F}x_i$. The quotient algebra $\K[\Delta]=R/I_{\Delta}$ is the \emph{Stanley--Reisner ring} of $\Delta$ over a field $\K$. The Krull dimension of $\K[\Delta]$ is $\dim\K[\Delta]=\dim\Delta+1$.
A simplicial complex with only one facet is a \emph{simplex}.
Let $f_i=f_i(\Delta)$ be the number of faces of $\Delta$ of cardinality $i+1$. The sequence $f(\Delta)=(f_{-1}=1,f_0,f_1,\dots,f_{d-1})$ is the \emph{$f$-vector} of $\Delta$. The \emph{$h$-vector} $h(\Delta)=(h_0,h_1,\dots,h_d)$ of $\Delta$ can be computed in terms of $f$-vector as follows:
\begin{equation}\label{eq h_i}
	h_i=\sum_{j=0}^{i}(-1)^{i-j}\binom{d-j}{i-j}f_{j-1},\quad 0\leq i\leq d.
\end{equation}
The \emph{Hilbert series} of $\K[\Delta]$ is of the form $H_{\K[\Delta]}(t)=(h_0+h_1t+\dots+h_st^s)/(1-t)^d$. The \emph{Hilbert polynomial}  of $\K[\Delta]$ is $P_{\K[\Delta]}(t)=h_0+h_1t+\dots+h_st^s$ with $h_s\neq0$. The \emph{$a$-invariant} $a(\K[\Delta])$ is the degree of rational function $H_{\K[\Delta]}(t)$, that is, $s-d$.

A \emph{subcomplex} $\Gamma$ of $\Delta$ is a simplicial complex whose facets are faces of $\Delta$. If $F \in \Delta$ is a face, then the \emph{deletion} of $F$ is the subcomplex
$$\del_{\Delta}(F)=\{E\in\Delta\mid E\cap F= \emptyset\},$$
of $\Delta$ and the \emph{link} of $F$ is the subcomplex of $\Delta$ and defined by
$$\link_{\Delta}(F)=\{E\in\Delta\mid E\cap F=\emptyset\text{ and }E\cup F\in\Delta\}.$$

Let $G$ be a graph and $I(G)$ the edge ideal of $G$ as defined in the introduction section.  A \emph{path} $P_k$ of length $k$ in $G$ is a sequence of distinct vertices $x_{i_0}, x_{i_1}, \dots, x_{i_k}$ and a sequence of edges $\{x_{i_j}, x_{i_{j+1}}\}\in E(G)$. A \emph{cycle}  of length $k$ in $G$ is a path $P_k$ together with the edge $\{x_{i_k}, x_{i_0}\}$. 
%
%
A \emph{connected} graph $G$ is a graph that has a path between every pair of vertices, and disconnected otherwise.
 The \emph{neighbourhood} of a vertex $x\in V(G)$ is the set $N_G(x)=\{x_j\in V(G)\mid x\mbox{ is adjacent to }x_j\}$. 
 For any subset $F\subseteq V(G)$, the set of neighbourhood of $F$ is $N_G(F)=\bigcup_{x\in F}N_G(x)$. The \emph{closed neighbourhood} of $F$ is $N_G[F]=N_G(F)\cup F$. The \emph{degree} of a vertex $x\in V(G)$ is defined to be $\deg_G(x)=|N_G(x)|$. An \emph{isolated} vertex of $G$ is a vertex of degree zero.
 A graph $H$ is a \emph{subgraph} of $G$ if $V(H)\subseteq V(G)$ and $E(H)\subseteq E(G)$. An \emph{induced subgraph} on $S\subseteq V(G)$ is a subgraph $G_S$ of $G$ such that $\{x_i,x_j\}\in E({G_S})$ if $\{x_i,x_j\}\in E(G)$ for all $x_i,x_j\in S$. The \emph{deletion} $\del_G(S)$ of $S$ is an indued subgraph of $G$ on $V(G)\backslash S$. The \emph{disjointness} $\dis_G(S)$ of $S$ is an induced subgraph of $G$ on $V(G)\backslash N_G[S]$. If $S=\{x\}$, we write $\del_G(x)$ and $\dis_G(x)$ instead of $\del_G(\{x\})$ and $\dis_G(\{x\})$, respectively. Hence $\dis_G(S)=\del_G(N_G[S])$. 
 
 The \emph{complement} of a graph $G$ is the graph $\overline{G}$ such that $V(G)=V({\overline{G}})$ and $E({\overline{G}})=\{\{x_i,x_j\}\mid \{x_i,x_j\}\notin E(G)\}$. A subset $C \subseteq V(G)$ is called a \emph{vertex cover} of $G$ if every edge of $G$ has one its endpoints in $C$.  A vertex cover is \emph{minimal} if it is minimal with respect to set inclusion among the set of vertex covers $G$. A graph $G$ is \emph{unmixed} if all minimal vertex covers have the same cardinality. A subset $F\subseteq V(G)$ is called an \emph{independent} set of $G$ if no two vertices in $F$ are adjacent. An independent set $F$ is \emph{maximal} if it is not contained in any other independent set. A graph $G$ is well-covered if all maximal independent sets have the same cardinality. Hence, a graph $G$ is unmixed if and only if it is well-covered. A subset $K\subseteq V(G)$ is a \emph{clique} of $G$ if every two distinct vertices of $K$ are adjacent in $G$. A \emph{complete graph} $\mathcal{K}_r$ is a clique on $r$ vertices.The \emph{independence complex} $\Delta_G$ of $G$ is the set of all independent sets of $G$, that is $$\Delta_G=\{F\subseteq V(G)\mid F\text{ is an independent set in }G\}.$$
In this case $I_{\Delta_G}=I(G)$. The \emph{clique complex} $\Delta(G)$ of $G$ consists all the clique sets of $G$, that is $$\Delta(G)=\{K\subseteq V(G)\mid K\text{ is a clique set of }G\}.$$
Note that the independent simplicial complex of a graph $G$ is the clique complex of $\overline{G}$. Hence $\Delta_G=\Delta(\overline{G})$. Further, a graph $G$ is unmixed if and only if $\Delta_G$ is pure. An \emph{induced $k$-cycle} $C_k$ in $G$ is cycle of length $k$ such that $G_{V(C_k)}=C_k$. A graph $G$ is called \emph{chordal} if it does not contain any $C_k$, $k\geq4$. A graph $G$ is  \emph{co-chordal} if $\overline{G}$ is chordal.

Consider the following minimal graded free resolution of $\K[\Delta_G]=R/I(G)$ over $R$
$$
0 \longrightarrow \bigoplus_{j} R(-j)^{\beta_{p, j}} \longrightarrow \cdots \longrightarrow \bigoplus_{j} R(-j)^{\beta_{2, j}}\longrightarrow \bigoplus_{j} R(-j)^{\beta_{1, j}} \longrightarrow R \longrightarrow\K[\Delta_G] \longrightarrow 0
$$
where $R(-j)$ denotes the $R$-module obtained by shifting the degrees of $R$ by $j$.
The integer $\beta_{i, j}(\K[\Delta_G]):=\beta_{i, j}$ is called the $i^{th}$ graded Betti number of $\K[\Delta_G]$ in degree $j$. The length $p$ of the resolution is called the \emph{projective dimension} of $\K[\Delta_G]$ over $R$, that is
$$
\pdim\K[\Delta_G]=\max \left\{i\mid \beta_{i, j}(\K[\Delta_G]) \neq 0 \text { for some } j\right\} \text {. }
$$
The (Castelnuovo–Mumford) \emph{regularity} of $\K[\Delta_G]$ over $R$ is $$\reg\K[\Delta_G]=\max\{j-i\mid \beta_{i,j}(\K[\Delta_G])\neq0\}.$$

The 2-linear resolution (over $R$) of $\K[\Delta_G]$ is the minimal graded free resolution of the form
$$
0 \longrightarrow R(-2-p)^{\beta_{p}} \longrightarrow \cdots\longrightarrow R(-3)^{\beta_{1}} \longrightarrow R(-2)^{\beta_{0}} \longrightarrow R \longrightarrow\K[\Delta_G] \longrightarrow 0.
$$

Faridi \cite{Faridi2004} proposed the concept of a leaf for a simplicial complex $\Delta$ and simplicial forests, which was inspired by the definition of trees and leaves in graph theory. A tree is a connected graph with no cycles. Alternatively, a connected graph is a tree if every subgraph has a vertex that is connected to only one edge of the graph, which is called a leaf. A facet $F$ of a simplicial complex $\Delta$ is a \emph{leaf} if either $F$ is the only facet in $\Delta$, or there is another facet $M$ in $\Delta$ (different from $F$) such that for every facet $N\in\Delta$ (excluding $F$), $N\cap F\subset M\cap F$. After Faridi, Zheng \cite{Zheng2004} introduced the notion of quasi-forest simplicial complexes. A simplicial complex $\Delta$ is quasi-forest if there is an order $F_1,\dots,F_q$ of the facets of $\Delta$, called a \emph{leaf order}, such that $F_i$ is a leaf of the subcomplex $\lr{F_1,\dots,F_i}$ for each $i=1,\dots,q$. A connected quasi-forest simplicial complex is \emph{quasi-tree}. A \emph{free vertex} is a vertex which belongs to exactly one facet. 

The concept of sequentially Cohen--Macaulayness was first introduced by Stanley {\cite[Definition 2.9]{Stanley1996}}. Stanley showed that sequentially Cohen--Macaulayness is a weaker property than Cohen--Macaulayness, but still has many important applications in algebraic combinatorics and algebraic geometry. Bj\"orner and Wachs extended the definitions of vertex decomposability and shellability for non-pure cases, see {\cite[Definition 2.1]{BjornerWachs1996}} and {\cite[Definition 11.1]{BjornerWachs1997}}. A simplicial complex $\Delta$ is  \emph{vertex decomposable} if $\Delta$ is an empty set, $\Delta$ is a simplex or there exists a vertex $v \in V$ such that
\begin{enumerate}
	\item[i)]  $\del_{\Delta}(v)$ and $\link_{\Delta}(v)$ are both vertex decomposable.
	\item[ii)]\label{shedding} No facet $F$ of $\link_{\Delta}(v)$ is also a facet of $\del_{\Delta}(v)$ (equivalently, every facet of $\del_\Delta(v)$ is a facet of $\Delta$). 
\end{enumerate}
If $\Delta$ is pure, we call $\Delta$ {pure vertex decomposable}. A vertex $v$ that satisfies condition (ii) is called \emph{shedding vertex}. A simplicial complex $\Delta$ is \emph{shellable}  if the facets of $\Delta$ can be ordered, say $F_1,\ldots,F_r$, such that for all $1\leq i<j\leq r$, there  exists some $v\in F_j\setminus F_i$ and some $1\leq k<j$ with $F_j\setminus F_k= \{v\}$. Such ordering  $F_1,\ldots,F_r$  is called a \emph{shelling order}. If $\Delta$ is pure, we call $\Delta$ {pure shellable}. A finitely generated graded $R$-module $N$ is \emph{sequentially Cohen--Macaulay} if there exists a finite filtration of $R$-modules
$$0=N_0\subset N_1\subset\dots\subset N_r=N$$
such that each quotient $N_i/N_{i-1}$ is Cohen--Macaulay and $\dim N_1/N_0<\dim N_2/N_1<\dots<\dim N_r/N_{r-1}$. Bj\"orner and Wachs showed that if $\Delta$ is a vertex decomposable, then $\Delta$ is shellable {\cite[Theorem 11.3]{BjornerWachs1997}}. Stanley showed that if $\Delta$ is shellable, then $\Delta$ is sequentially Cohen--Macaulay {\cite[p.87]{Stanley1996}}. Hence, if $\Delta$ is vertex decomposable, then $\Delta$ is sequentially Cohen--Macaulay. The converse is also true by \cite[Proposition 1.2 and Theorem 3.1]{GoodarziYassemi2012} if $\Delta$ is quasi-forest simplicial complex.
Dirac's theorem \cite{Dirac1961} on chordal graphs says that $\Delta_G$ is quasi-forest if and only if $G$ is co-chordal. Hence by {\cite[Theorem 1]{Froberg1990}}, $\Delta_G$ is quasi-forest if and only if $\K[\Delta_G]$ has 2-linear resolution.

We refer the reader to \cite{HerzogHibi2011, Stanley1996, Villarreal2015} for further details regarding the terminologies in this section.

\section{The $(d_1,d_2,\dots,d_q)$-tree graphs}
The classes of $d$-tree and generalized $d$-tree graphs were initially defined by Fr\"oberg in {\cite{Froberg1990}}. In fact, generalized $d$-tree graphs are exactly the chordal graphs. Here, we define a new class of graphs that lie strictly between $d$-trees and chordal graphs. We call these graphs \emph{$(d_1,d_2,\dots,d_q)$-trees}.

\begin{definition}\label{d-tree definition}
	Let $(d_1,d_2,\dots,d_q)$ be a non-increasing sequence of positive integers. Then a \emph{$(d_1,d_2,\dots,d_q)$-tree} is a graph $G$ constructed inductively  as follows:
	\begin{enumerate}
		\item[i)] The graph $H_1=\mathcal{K}_{d_1}$,
		\item[ii)] $H_i=H_{i-1}\bigcup_{\mathcal{K}_{d_i-1}}\mathcal{K}_{d_i}$ for $2\leq i\leq q$, and
		\item[iii)] $G=H_q$.
	\end{enumerate}
\end{definition}

From the construction of $(d_1,d_2,\dots,d_q)$-trees, we observe that a $d$-tree is a $(d_1,d_2,\dots,d_q)$-tree with $d_ i=d$ for $1\le i\le q$. Also, a $(d_1,d_2,\dots,d_q)$-tree is a chordal graph. In other words, we have the following implications:
\begin{center}
	$d\mbox{-tree graphs}\Rightarrow(d_1,d_2,\dots,d_q)\mbox{-tree graphs}\Rightarrow\mbox{chordal graphs}$.
\end{center}
However, the converse of the implications do not hold in general. For example, see the graphs in Figure \ref{fig2}.
\begin{figure}[ht]
	\begin{subfigure}{.45\textwidth}
		\begin{center}
			\begin{tikzpicture}[x = 2cm, y = 1.5cm]
				\draw[fill] (0,0) circle [radius = 0.04];
				\draw[fill] (1,1) circle [radius = 0.04];
				\draw[fill] (1,0) circle [radius = 0.04];
				\draw[fill] (2,1) circle [radius = 0.04];
				
				\draw (0,0)--(1,1)--(1,0)--(0,0);
				\draw (1,0)--(1,1)--(2,1)--(1,0);
			\end{tikzpicture}
			\caption*{A $(3,3)$-tree}
		\end{center}
	\end{subfigure}
	\begin{subfigure}{.5\textwidth}
		\begin{center}
			\begin{tikzpicture}[x = 2cm, y = 1.5cm]
				\draw[fill] (0,0) circle [radius = 0.04];
				\draw[fill] (.5,1) circle [radius = 0.04];
				\draw[fill] (1,0) circle [radius = 0.04];
				\draw[fill] (2,0) circle [radius = 0.04];
				
				\draw (0,0)--(.5,1)--(1,0)--(0,0);
				\draw (1,0)--(2,0);
			\end{tikzpicture}
			\caption*{A $(3,2)$-tree but not $d$-tree.}
		\end{center}
	\end{subfigure}
	
	\begin{subfigure}{1\textwidth}
		\begin{center}
			\begin{tikzpicture}[x = 2cm, y = 1.5cm]
				
				\draw[fill] (0,0) circle [radius = 0.04];
				\draw[fill] (.5,1) circle [radius = 0.04];
				\draw[fill] (1,0) circle [radius = 0.04];
				\draw[fill] (2,0) circle [radius = 0.04];
				\draw[fill] (1.5,1) circle [radius = 0.04];
				
				\draw (0,0)--(.5,1)--(1,0)--(0,0);
				\draw (1,0)--(1.5,1)--(2,0)--(1,0);
			\end{tikzpicture}
			\caption*{A chordal graph that is  not a $(d_1,d_2,\dots,d_q)$-tree.}
		\end{center}
	\end{subfigure}
	\caption{Examples of generalised $d$-tree graphs}\label{fig2}
\end{figure}

Note that we can always find a $(d_1,d_2,\dots,d_q)$-tree graph for any sequence $(d_1,d_2,\dots,d_q)$ in the definition \ref{d-tree definition}. However, there may be multiple non-isomorphic $(d_1,d_2,\dots,d_q)$-trees for a given sequence. Figure \ref{fig5} shows examples of two non-isomorphic $(d_1,d_2,\dots,d_q)$)-tree graphs.
\begin{figure}[ht]
	\begin{subfigure}{.55\textwidth}
		\begin{center}
			\begin{tikzpicture}[x = 2cm, y = 1.5cm]
				\draw[fill] (0,0) circle [radius = 0.04];
				\draw[fill] (1,1) circle [radius = 0.04];
				\draw[fill] (1,0) circle [radius = 0.04];
				\draw[fill] (2,1) circle [radius = 0.04];
				\draw[fill] (3,1) circle [radius = 0.04];
				
				\draw (0,0)--(1,1)--(1,0)--(0,0);
				\draw (1,0)--(1,1)--(2,1)--(1,0);
				\draw (2,1)--(3,1);
			\end{tikzpicture}
		\end{center}
	\end{subfigure}
	\begin{subfigure}{.35\textwidth}
		\begin{center}
			\begin{tikzpicture}[x = 2cm, y = 1.5cm]
				\draw[fill] (0,0) circle [radius = 0.04];
				\draw[fill] (1,1) circle [radius = 0.04];
				\draw[fill] (1,0) circle [radius = 0.04];
				\draw[fill] (2,1) circle [radius = 0.04];
				\draw[fill] (2,0) circle [radius = 0.04];
				
				\draw (0,0)--(1,1)--(1,0)--(0,0);
				\draw (1,0)--(1,1)--(2,1)--(1,0);
				\draw (1,0)--(2,0);
			\end{tikzpicture}
		\end{center}
	\end{subfigure}
	\caption{Two non-isomorphic  $(3,3,2)$-tree graphs.}\label{fig5}
\end{figure}

The clique complex of a $(d_1,d_2,\dots,d_q)$-tree graph is $\Delta(G)=\lr{F_1,\dots,F_q}$ where $F_i=V(\mathcal{K}_{d_i})$ for $1\leq i\leq q$. In the next theorem, we show that the clique complex of $(d_1,d_2,\dots,d_q)$-tree graphs are sequentially Cohen--Macaulay.

\begin{theorem}\label{d-tree-VD}
	Let $G$ be a graph. Then $\overline{G}$ is a $(d_1,d_2,\dots,d_q)$-tree if and only if $\Delta_G=\lr{F_1,F_2,\dots,F_q}$ is a vertex decomposable (hence shellable and sequentially Cohen--Macaulay) quasi-forest simplicial complex.	
\end{theorem}
\begin{proof}
	Let $F_i = V(\mathcal{K}_{d_i})$ for $i=1,\dots,q$. Using the definition of $(d_1,d_2,\dots,d_q)$-tree, we see that the complex $\Delta_G = \langle F_1, F_2,\dots,F_q\rangle$ is the clique complex of $\overline{G}$. By Dirac's theorem, see \cite[Theorem 1 and 2]{Dirac1961}, the graph $\overline{G}$ is chordal. Therefore, we can conclude that $\Delta_G$ is a quasi-forest.
	
	For the rest of the proof we use induction on $|V(G)|$ to show that $\Delta_G$ is vertex decomposable. If $|V(G)|=2$, then $G$ is an edge. Clearly, $\overline{G}$ is a $(1,1)$-tree and $\Delta_G$ is vertex decomposable. Suppose $|V(G)|=n>2$ for some integer $n$, and the statement is true for any graph with less than $n$ vertices.
	
	By the definition of $(d_1,d_2,\dots,d_q)$-tree, there is a vertex $x\in F_q\backslash\bigcup_{i=1}^{q-1}F_i$ and $F_q\backslash F_j=\{x\}$ for some $1\leq j<q-1$. Hence, $x$ is a shedding vertex.
	
	Then, $\del_{\Delta_G}(x)=\lr{F_1,\dots,F_{q-1}}$ and $\link_{\Delta_G}(x)=\lr{F_q\backslash\{x\}}$. Note that $\del_{\overline{G}}(x)$
	is a $(d_1,\dots,d_{q-1})$-tree and $\dis_{\overline{G}}(x)$ is a $(d_{q}-1)$-tree. On the other hand, $\del_{\Delta_G}(x)=\Delta_{\del_G(x)}$ and $\link_{\Delta_G}(x)=\Delta_{\dis_{G}(x)}$. Hence, by the induction hypothesis, $\del_{\Delta_G}(x)$ and $\link_{\Delta_G}(x)$ are vertex decomposable. Therefore, $\Delta_G$ is vertex decomposable.
	
	Conversely, we assume that $\Delta_{G}$ is a shellable quasi-forest simplicial complex with shelling order ${F_1,{F}_{2},\dots,F_{q}}$. We use induction on the number of facets of $\Delta_{G}$.
	By \cite[Lemma 1.1]{GoodarziYassemi2012},, there exists $j>1$  such that $F_j$ is a leaf of $\Delta_{G}$ with a unique free vertex, say $x$.  Let $\Delta_{G'}=\lr{F_1,\dots,\widehat{F}_{j},\dots,F_{q}}, 1<j\leq q$. Then $\Delta_{G'}$ is shellable quasi-forest with shelling order $F_1,\dots,\widehat{F}_{j},\dots,F_{q}$ (for more details, see the proof of Proposition 1.2 in \cite{GoodarziYassemi2012}). The inductive step implies that $\overline{G'}$ is a $(d_1,\dots,\widehat{d}_j,\dots,d_q)$-tree for $1< j\leq q$. Now, we add the facet $F_j$ to $\Delta_{G'}$.
	By the shellablity of $\Delta_{G}$, there exists a facet $F_i$ with $1\leq i<j$ such that $F_j\backslash F_i=\{x\}$. This implies that $\overline{G}$ is a $(d_1,d_2,\dots,d_q)$-tree. By \cite[Proposition 1.2]{GoodarziYassemi2012}, shellablity and vertex decomposability are equivalent.
\end{proof}

\begin{remark}\label{remark 2}
	\begin{enumerate}[\rmfamily i)]
		\item The sequence of positive integers $(d_1,d_2,\dots,d_q)$ can be used to identify sequentially Cohen--Macaulay quasi-forest Stanley--Reisner rings from Theorem \ref{d-tree-VD}.
		\item Let  $\Delta_G=\lr{F_1,\dots, F_q}$ be the independence complex of a graph $G$ such that $\overline{G}$ is a $(d_1,\dots, d_q)$-tree. Then from the definition of shelling and leaf order we note that $F_1,\dots, F_q$ is a shelling and a leaf order of $\Delta_G$.
	\end{enumerate}
\end{remark}

The following Corollary follows immediately form Theorem \ref{d-tree-VD}.

\begin{corollary}\label{d-tree pure VD}
	The clique complex of a $d$-tree is pure vertex decomposable quasi-forest and vice versa (hence pure shellable {\cite[Theorem 2.13]{MoradiKiani2010}} and Cohen--Macaulay {\cite[Theorem 2]{Froberg1990}}).
\end{corollary}

In the following we provide some necessary conditions for a graph in order to be a $(d_1,d_2,\dots,d_q)$-tree. 

\begin{proposition}\label{qf b shedding}
	Let $G$ be a graph such that $\overline{G}$ is a $(d_1,d_2,\dots,d_q)$-tree. Then any vertex of maximum degree in $G$ is a free and shedding vertex of $\Delta_G$.
\end{proposition}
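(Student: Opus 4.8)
The plan is to pass to the clique complex $\Delta_G=\Delta(\overline G)=\lr{F_1,\dots,F_q}$ with $F_i=V(\mathcal{K}_{d_i})$, and to exploit the inductive construction $\overline G=H_q$, $H_1=\mathcal{K}_{d_1}$, $H_i=H_{i-1}\bigcup_{\mathcal{K}_{d_i-1}}\mathcal{K}_{d_i}$. For $2\le i\le q$ write $C_i=F_i\cap V(H_{i-1})$ for the glued clique (so $|C_i|=d_i-1$) and $w_i$ for the one vertex of $F_i$ outside $H_{i-1}$. I would first record some bookkeeping: $\Delta(H_i)=\lr{F_1,\dots,F_i}$, so every vertex of $C_i$ lies in some $F_k$ with $k<i$ (as $C_i$ is a face of $\Delta(H_{i-1})$); the facets $F_1,\dots,F_q$ are pairwise incomparable, since for $k<j$ we have $w_j\notin V(H_{j-1})\supseteq F_k$ (so $F_j\not\subseteq F_k$) and $F_k\subseteq F_j$ would force $F_k\subseteq C_j$ against $|F_k|=d_k\ge d_j>|C_j|$; and for every vertex $v$, $N_{\overline G}[v]=\bigcup\{F_i:v\in F_i\}$.

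Next I would identify the maximum-degree vertices of $G$ as exactly the minimum-degree vertices of $\overline G$, via $\deg_G(v)=n-1-\deg_{\overline G}(v)$. If $v\in F_i$ then $|N_{\overline G}[v]|\ge|F_i|=d_i\ge d_q$; and if $v$ lies in two distinct facets then $N_{\overline G}[v]$ strictly contains a facet (by incomparability), so $\deg_{\overline G}(v)\ge d_q$. Since $w_q$ lies only in $F_q$, the value $\deg_{\overline G}(w_q)=|F_q|-1=d_q-1$ is attained, so the minimum $\overline G$-degree equals $d_q-1$. Hence a maximum-degree vertex $x$ of $G$ satisfies $\deg_{\overline G}(x)=d_q-1$, and the dichotomy above forces $x$ to lie in a single facet $F_i$ with $|F_i|=d_q$; in particular $x$ is a free vertex of $\Delta_G$.

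For the shedding claim I would use that, since $x$ is free, $\del_{\Delta_G}(x)=\lr{F_j\ (j\ne i),\,F_i\setminus\{x\}}$, so it suffices to show $F_i\setminus\{x\}$ is a proper subface of some $F_k$ with $k\ne i$: then the facets of $\del_{\Delta_G}(x)$ are precisely the $F_j$ with $j\ne i$, all facets of $\Delta_G$, which is condition (ii). If $i\ge2$: every vertex of $C_i$ lies in some $F_k$ with $k<i$, so freeness of $x$ forces $x=w_i$, whence $F_i\setminus\{x\}=C_i\subseteq F_k$ for some $k<i$, and $|C_i|=d_i-1<d_k=|F_k|$ makes the inclusion proper. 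If $i=1$: then $d_1=|F_1|=d_q$, so all $d_i$ are equal, the glued clique $C_2\subseteq F_1$ has $|C_2|=d_q-1=|F_1|-1$, hence $F_1=C_2\cup\{v\}$ for one vertex $v$; since $x\in C_2$ would put $x$ in $F_2$, we get $v=x$ and $F_1\setminus\{x\}=C_2\subsetneq F_2$. In both cases $x$ is shedding.

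The step I expect to require the most care is this last one, and specifically the case $x\in F_1$: here one genuinely needs the maximum-degree hypothesis (which collapses the sequence to a constant one) to guarantee that deleting $x$ from the root clique $\mathcal{K}_{d_1}$ leaves a face absorbed by $F_2$. For an arbitrary free vertex of $F_1$ this can fail; the extreme instance is $q=1$, where $\Delta_G$ is a simplex and has no shedding vertex, so that degenerate case should be excluded or treated separately.
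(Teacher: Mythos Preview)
Your proof is correct and follows essentially the same route as the paper's: pass to $\overline G$, identify maximum $G$-degree with minimum $\overline G$-degree, argue that a minimum-degree vertex must be free in a facet of size $d_q$, and then verify the shedding condition by showing $F_i\setminus\{x\}$ is absorbed by another facet (splitting into the cases $i\ge 2$ and $i=1$). Your write-up is in fact more explicit than the paper's—your use of the incomparability of the $F_i$ and the formula $N_{\overline G}[v]=\bigcup\{F_i:v\in F_i\}$ makes the freeness argument airtight, and your observation about the degenerate case $q=1$ is a genuine refinement the paper leaves implicit.
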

\begin{proof}
	Let $F_{i}=V\left(\mathcal{K}_{d_{i}}\right)$ for $i=1,\dots,q$. By the definition of $(d_1, d_2, \ldots, d_q)$-tree, we can construct the independence complex $\Delta_G=\lr{F_1,F_2,\dots,F_q}$ from $G$. Let $x$ be a vertex in $G$ with the maximum degree. Since $\Delta_G$ is the independence complex of $G$, $x$ has the minimum degree in $\overline{G}$. Since $F_q$ contains a free vertex, and it is of minimum cardinality in $\Delta_G$, we have $\left|F_j\right| = \left|F_q\right|$ whenever $x \in F_j$, for $1 \leq j \leq q$. Hence, if $x$ is not a free vertex, then the degree of $x$ cannot be minimum in $\overline{G}$, which is a contradiction. Therefore, $x$ must be a free vertex.
	
	To show $x$ is a shedding vertex. Suppose first that $x\in F_1$. Then we have $|F_1|=|F_j|$ for all $j$ since $F_1$ contains a free vertex and has minimum cardinality. Now suppose that $x$ is not a shedding vertex. Then $F_1\backslash\{x\}$ is not a face of  $\left\langle  F_{2}, \ldots, F_{q}\right\rangle$. This implies that $\mathcal{K}_{d_{2}}$ is attached to $\mathcal{K}_{d_{1}}$ from at most $\mathcal{K}_{d_{2}-2}$, which contradicts the fact that $\overline{G}$ is a $(d_1,d_2,\dots,d_q)$-tree. Now, if $x \in F_j$ for some $2 \leq j \leq q$, then there exists $1\leq i<j$ such that $F_j \setminus F_i=\{x\}$. In either case, we have shown that $x$ is a shedding vertex.
\end{proof}

\begin{proposition}\label{qf max adjacent}
	If $G$ is a graph such that $\overline{G}$ is a $(d_1,d_2,\dots,d_q)$-tree, then any two vertices of $G$ with maximum degree are adjacent.
\end{proposition}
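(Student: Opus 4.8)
The plan is to prove the contrapositive, locating the maximum-degree vertices of $G$ among the facets $F_1,\dots,F_q$ of $\Delta_G=\langle F_1,\dots,F_q\rangle$, where $F_i=V(\mathcal{K}_{d_i})$ as in Definition~\ref{d-tree definition}. The starting point is Proposition~\ref{qf b shedding} together with the computation in its proof: any vertex $x$ of maximum degree in $G$ is a free vertex of $\Delta_G$, and the unique facet $F_x$ containing it has minimum cardinality $d_q$. One may see this directly as well: if $z$ is free with unique facet $F$, then a vertex $v$ is a neighbour of $z$ in $\overline{G}$ exactly when $\{v,z\}$ is a face of $\Delta_G$, i.e. when $v\in F$, so $\deg_{\overline{G}}(z)=|F|-1$; on the other hand every vertex of $G$ lies in some facet, hence $\deg_{\overline{G}}(v)\ge d_q-1$ for all $v$, while the vertex added last in the inductive construction is free and lies in a facet of size $d_q$. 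Thus the vertices of minimum degree in $\overline{G}$, equivalently of maximum degree in $G$, are exactly the free vertices contained in a minimum-cardinality facet of $\Delta_G$.

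Now suppose $x$ and $y$ are two distinct vertices of maximum degree in $G$ and, aiming at a contradiction, assume they are not adjacent in $G$. Then $\{x,y\}$ is an independent set, hence a face of $\Delta_G$, so $\{x,y\}\subseteq F_c$ for some facet $F_c$; since $x$ and $y$ are free, $F_c$ is the unique facet containing either of them, whence $F_x=F_y=F_c=:F$. So $F$ is a single facet of minimum cardinality that contains the two distinct free vertices $x$ and $y$, and it remains to rule this out.

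To conclude, I would show that a minimum-cardinality facet of a $(d_1,\dots,d_q)$-tree carries at most one free vertex. If $F=F_i$ with $i\ge 2$, then by Definition~\ref{d-tree definition} $F_i$ is the union of a clique $S$ of size $d_i-1$ already present in $H_{i-1}$ with one new vertex $v_i$; since $S$ is a face of $\langle F_1,\dots,F_{i-1}\rangle$ it lies in some $F_m$ with $m<i$, so every vertex of $S$ belongs to at least two facets and is not free, leaving $v_i$ as the only possible free vertex of $F_i$. If instead $F=F_1$, then $d_1=|F|=d_q$ together with $d_1\ge\dots\ge d_q$ forces $d_1=\dots=d_q$, so $\overline{G}$ is a $d$-tree; then $F_2$ meets $F_1$ in a clique of size $d-1=|F_1|-1$, so at most one vertex of $F_1$ lies outside $F_2$, and every remaining vertex of $F_1$ lies in two facets and is not free. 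In either case $F$ has at most one free vertex, contradicting $x\ne y$, so $x$ and $y$ must be adjacent. (The lone degenerate case $q=1$, with $\overline{G}=\mathcal{K}_{d_1}$ and $G$ edgeless, is understood to be excluded or treated as trivial.) The step that needs genuine care is the first one, namely identifying which vertices of $G$ have maximum degree --- they are exactly the free vertices lying in a smallest facet of $\Delta_G$; the concluding casework just unwinds the defining construction.
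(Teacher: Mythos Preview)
Your proof is correct, but it diverges from the paper's argument in the final step. Both proofs begin the same way: invoke Proposition~\ref{qf b shedding} to see that any maximum-degree vertex of $G$ is a free vertex lying in a facet of minimum size $d_q$, and observe that two non-adjacent such vertices $x,y$ would have to sit inside a common facet $F$. From there the paper finishes by using the \emph{shedding} half of Proposition~\ref{qf b shedding}: it first shows $N_G(x)=N_G(y)$ (otherwise an edge survives in $G\setminus N_G[x]$, contradicting freeness of $x$), deduces that every facet through $x$ also contains $y$, and concludes that $F\setminus\{x\}$ is a facet of $\del_{\Delta_G}(x)$ which is not a facet of $\Delta_G$, so $x$ fails to be shedding. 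You instead bypass the shedding property entirely and read off directly from Definition~\ref{d-tree definition} that a minimum-cardinality facet can contain at most one free vertex (for $i\ge 2$ only the newly added vertex $v_i$ can be free; for $i=1$ the sequence is forced to be constant and $F_1\cap F_2$ has size $d-1$). Your approach is slightly more elementary in that it uses only the ``free'' conclusion of Proposition~\ref{qf b shedding} together with the explicit inductive construction, whereas the paper's proof leans on both conclusions of that proposition and on the abstract shedding-vertex machinery; the trade-off is that your argument requires a small case split on whether $F=F_1$.
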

\begin{proof}
	Suppose that $G$ has two non-adjacent vertices $x$ and $x'$ of maximum degree. We want to show that $N_G(x)=N_G(x')$, which means that $x$ and $x'$ have the same set of neighbours. Assume that there is a vertex $v'$ in $N_G(x')\backslash N_G(x)$. Then $d(x,x')\geq2$ and $d(x,v')\geq2$. This means that $\dis_G(x)$ contains at least the edge $\{x',v'\}$. Therefore, $x$ is not a free vertex in $\Delta_G$, which contradicts Proposition \ref{qf b shedding}.
	
	Thus, we have $N_G(x)=N_G(x')$. Any facet $F$ of $\Delta_G$ that contains $x$ must also contain $x'$. Therefore, $F\backslash\{x\}$ is a facet of $\del_{\Delta_G}(x)$. This implies that $x$ is not a shedding vertex, which again contradicts Proposition \ref{qf b shedding}. Therefore, it follows that the vertices in $G$ with the maximum degrees must be adjacent.
\end{proof}

\begin{example}
	Consider the following applications of  Propositions~\ref{qf b shedding} and \ref{qf max adjacent}. 
	\begin{itemize}
		\item Consider the graph $G$ and its complement in Figure \ref{fig6}. The vertex $x$ has maximum degree in $G$, but it is not a free vertex in the independence complex $\Delta_G$. Therefore, according to Proposition \ref{qf b shedding}, $\overline{G}$ cannot be a $(d_1,d_2,\dots,d_q)$-tree. Furthermore, $G$ has more that two non-adjacent vertices of maximum degrees, Proposition \ref{qf max adjacent} also implies that $\overline{G}$ is not a $(d_1,d_2,\dots,d_q)$-tree.
		\begin{figure}[ht]
			\begin{subfigure}{.45\textwidth}
				\begin{center}
					\begin{tikzpicture}[x = 2cm, y = 1.5cm]
						\draw[fill] (0,0) circle [radius = 0.04];
						\draw[fill] (.7,0) circle [radius = 0.04];
						\draw[fill] (.7,.7) circle [radius = 0.04];
						\draw[fill] (0,.7) circle [radius = 0.04];
						\draw[fill] (1.25,.35) circle [radius = 0.04];
						\draw[fill] (-.55,.35) circle [radius = 0.04];
						\draw[fill] (.35,1.4) circle [radius = 0.04];
						
						\draw (0,0)--(.7,0)--(.7,.7)--(0,.7)--(0,0);
						\draw (.7,0)--(1.25,.35);
						\draw (0,0)--(-.55,.35)--(0,.7)--(0,0);
						\draw (.35,1.4)--(.7,.7);
						\draw (.35,1.4)--(1.25,.35);
						\draw (.35,1.4)--(-.55,.35);
						\draw (0,.7)--(1.25,.35);
						\draw (0,0)--(1.25,.35);
						\draw (-.55,.35)--(.7,0);
						
						\node[below] at (0,0) {$x$};
					\end{tikzpicture}
					\caption*{A graph $G$}
				\end{center}
			\end{subfigure}
			\begin{subfigure}{.45\textwidth}
				\begin{center}
					\begin{tikzpicture}[x = 2cm, y = 1.5cm]
						\draw[fill] (0,0) circle [radius = 0.04];
						\draw[fill] (.5,1) circle [radius = 0.04];
						\draw[fill] (1,0) circle [radius = 0.04];
						\draw[fill] (2,0) circle [radius = 0.04];
						\draw[fill] (3,0) circle [radius = 0.04];
						\draw[fill] (3.5,1) circle [radius = 0.04];
						\draw[fill] (4,0) circle [radius = 0.04];
						
						\draw (0,0)--(.5,1)--(1,0)--(0,0);
						\draw (1,0)--(2,0);
						\draw (2,0)--(3,0);
						\draw (3,0)--(3.5,1)--(4,0)--(3,0);
						
						\node[above] at (2,0) {$x$};
					\end{tikzpicture}
				\end{center}
				\caption*{The complement of the graph $G$}
			\end{subfigure}
			\caption{A non-$(d_1,d_2,\dots,d_q)$-tree graph}\label{fig6}
		\end{figure}
		
		\item The complement of induced 4-cycle $C_4$ is not a $(d_1,d_2,\dots,d_q)$-tree because it has two non-adjacent vertices of maximum degrees.
		
		\item Consider the graph $G$ in Figure \ref{fig12}. Then the complement of the graph $G$ is not $(d_1,d_2,\dots,d_q)$-tree because it contains two vertices, namely $x_2$ and $x_4$, that have maximum degrees but are not adjacent in $G$.
		\begin{figure}[ht]
			\begin{center}
				\begin{tikzpicture}[x = 2cm, y = 1.5cm]
					\draw[fill] (0,0) circle [radius = 0.04];
					\draw[fill] (.75,.75) circle [radius = 0.04];
					\draw[fill] (.75,0) circle [radius = 0.04];
					\draw[fill] (.75,-.75) circle [radius = 0.04];
					\draw[fill] (1.5,0) circle [radius = 0.04];
					\draw[fill] (2.25,0) circle [radius = 0.04];
					\draw[fill] (1.5,-.75) circle [radius = 0.04];
					
					\draw (0,0)--(.75,.75)--(.75,0)--(.75,-.75)--(0,0);
					\draw (.75,.75)--(1.5,0)--(.75,-.75);
					\draw (.75,0)--(1.5,0)--(2.25,0);
					\draw (.75,.75)--(2.25,0);
					\draw (.75,-.75)--(2.25,0);
					\draw (.75,0)--(1.5,-.75);
					\draw (2.25,0)--(1.5,-.75);
					
					\node[left] at (0,0) {$x_1$};
					\node[above] at (.75,.75) {$x_2$};
					\node[left] at (.75,0) {$x_3$};
					\node[below] at (.75,-.75) {$x_4$};
					\node[above right=-.08] at (1.5,0) {$x_5$};
					\node[right] at (2.25,0) {$x_6$};
					\node[below] at (1.5,-.75) {$x_7$};
				\end{tikzpicture}
				\caption{The graph $G$}\label{fig12}
			\end{center}
		\end{figure}
	\end{itemize}
\end{example}


\section{Relation between Projective dimension of a graph and its maximum degrees}\label{section}
In this section, we establish a lower bound for $\pdim\K[\Delta_G]$ based on the maximum degree of vertices in any graph $G$. Moreover, we provide a sufficient condition under which $\pdim\K[\Delta_G]$ is equal to $\max_{1\leq i \leq n}\left\{\deg_{G}(x_i)\right\}$. We start by the following standard Lemma:

\begin{lemma}
	Let $G$ be a graph and $F\subseteq V(G)$. Then $F$ is a maximal independent set of $G$ if and only if $N_G[F]=V(G)$ and $F\cap N_G(F)=\emptyset$.
	\label{max ind}
\end{lemma}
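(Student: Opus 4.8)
The plan is to prove the two implications separately, translating everything into statements about closed neighbourhoods.

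\textbf{First direction.} Suppose $F$ is a maximal independent set of $G$. Since $F$ is independent, no vertex of $F$ is adjacent to another vertex of $F$, which says precisely that $F \cap N_G(F) = \emptyset$. For the equality $N_G[F] = V(G)$, the inclusion $N_G[F] \subseteq V(G)$ is trivial. For the reverse inclusion, take any $x \in V(G)$; I must show $x \in F$ or $x$ is adjacent to some vertex of $F$. If $x \in F$ we are done, so assume $x \notin F$. If $x$ were adjacent to no vertex of $F$, then $F \cup \{x\}$ would still be an independent set (no edge among vertices of $F$, and no edge between $x$ and $F$), strictly containing $F$, contradicting maximality. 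Hence $x \in N_G(F)$, and $V(G) \subseteq N_G[F]$.

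\textbf{Second direction.} Conversely, suppose $N_G[F] = V(G)$ and $F \cap N_G(F) = \emptyset$. The condition $F \cap N_G(F) = \emptyset$ means no vertex of $F$ lies in the neighbourhood of $F$, so in particular no two vertices of $F$ are adjacent; thus $F$ is independent. It remains to check $F$ is maximal. Suppose not, so there is an independent set $F'$ with $F \subsetneq F'$, and pick $x \in F' \setminus F$. Since $F \subseteq F'$ and $F'$ is independent, $x$ is not adjacent to any vertex of $F$, so $x \notin N_G(F)$; also $x \notin F$ by choice. Hence $x \notin N_G(F) \cup F = N_G[F] = V(G)$, contradicting $x \in V(G)$. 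Therefore $F$ is maximal.

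There is essentially no obstacle here — this is a routine unwinding of definitions, which is why the paper labels it a ``standard Lemma.'' The only point requiring a little care is the equivalence between ``$F$ is independent'' and the condition ``$F \cap N_G(F) = \emptyset$'': one should note that $x \in F$ being adjacent to $y \in F$ is the same as $x \in N_G(y) \subseteq N_G(F)$, so $x \in F \cap N_G(F)$; and conversely any $x \in F \cap N_G(F)$ gives $x \in N_G(y)$ for some $y \in F$, i.e.\ an edge inside $F$. I would state this observation once and then use it freely in both directions.
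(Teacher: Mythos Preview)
Your proof is correct and follows essentially the same route as the paper's: both directions are argued by the obvious contradiction using $N_G[F]$, and the equivalence ``$F$ independent $\Leftrightarrow F\cap N_G(F)=\emptyset$'' is used in each. If anything, your version is more carefully spelled out; the paper compresses the forward direction into the single sentence ``$V(G)\setminus F$ is the set of all neighbourhoods of $F$,'' which is exactly your maximality-contradiction argument in disguise.
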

\begin{proof}
	($\Rightarrow$)	Let $F$ be a maximal independent set. Then $V(G)\backslash F$ is the set of all neighbourhoods of $F$. Hence $F\cup N_G(F)=V(G)$ and $F\cap N_G(F)=\emptyset$.
	
	($\Leftarrow$) Let $x\in F$. Then $x\notin N_G(F)$ since $F\cap N_G(F)=\emptyset$. Thus there is no edge between the vertices of $F$. Therefore $F$ is an independent set. Now suppose there is an independent set $F'$ of $G$ with $F\subsetneq F'$. Let $y\in F'\backslash F$. Then $y\in N_G(F)$ and so $\{y,x_i\}$ is an edge for some $x_i\in F$. Hence $F\cup\{y\}\subseteq F'$ is not an independent set in $G$ which is contradiction. Therefore $F$ is maximal. 
\end{proof}

\begin{definition}\label{graph G_i}
	Let $F=\{x_1,\dots,x_r\}$ be an ordered independent set of a graph $G$. For $1\leq i\leq r$, the graph $G_i$ is obtained from $G$ inductively as follows:
	\begin{enumerate}
		\item[i)] $G_0=G$,
		\item[ii)] $G_{i}=\dis_{G_{i-1}}(x_i)$.
	\end{enumerate}
\end{definition}

\begin{remark}
	We can observe from Definition \ref{graph G_i} that $N_G(F)=\bigcup_{i=1}^{r}N_{G_{i-1}}(x_i)$ and $N_{G_{i-1}}(x_i)\cap N_{G_{j-1}}(x_j)=\emptyset$ for $1\leq i<j\leq r$.
\end{remark}

\begin{example}\label{example 1}
	Let us consider the graph $G$ depicted in Figure~\ref{Gi-process}. We choose the independent set $F=\{x_1,x_5\}$. Starting from $G$, we construct $G_1=\dis_G(x_1)$ by removing the closed neighbourhood of $x_1$ in $G$. Next, we construct $G_2=\dis_{G_{1}}(x_5)$ by removing the closed neighbourhood of $x_5$ in $G_1$. One can choose the other order, $F=\{x_5,x_1\}$, but the resulting graphs $G_i$ may differ. The graphs $G$, $G_1$, and $G_2$ are illustrated in Figure \ref{Gi-process}.
	\begin{figure}[ht]
		\begin{subfigure}{.4\textwidth}
			\begin{center}
				\begin{tikzpicture}[x = 2cm, y = 1.5cm]
					\draw[fill] (0,0) circle [radius = 0.04];
					\draw[fill] (.75,.75) circle [radius = 0.04];
					\draw[fill] (.75,0) circle [radius = 0.04];
					\draw[fill] (.75,-.75) circle [radius = 0.04];
					\draw[fill] (1.5,0) circle [radius = 0.04];
					\draw[fill] (2.25,0) circle [radius = 0.04];
					\draw[fill] (1.5,-.75) circle [radius = 0.04];
					
					\draw (0,0)--(.75,.75)--(.75,0)--(.75,-.75)--(0,0);
					\draw (.75,.75)--(1.5,0)--(.75,-.75);
					\draw (.75,0)--(1.5,0)--(2.25,0);
					\draw (.75,.75)--(2.25,0);
					\draw (.75,-.75)--(2.25,0);
					\draw (.75,0)--(1.5,-.75);
					\draw (2.25,0)--(1.5,-.75);
					
					\node[left] at (0,0) {$x_1$};
					\node[above] at (.75,.75) {$x_2$};
					\node[left] at (.75,0) {$x_3$};
					\node[below] at (.75,-.75) {$x_4$};
					\node[above right=-.08] at (1.5,0) {$x_5$};
					\node[right] at (2.25,0) {$x_6$};
					\node[below] at (1.5,-.75) {$x_7$};
				\end{tikzpicture}
				\caption*{The graph $G$}
			\end{center}
		\end{subfigure}
		\begin{subfigure}{.3\textwidth}
			\begin{center}
				\begin{tikzpicture}[x = 2cm, y = 1.5cm]
					\draw[fill] (.75,0) circle [radius = 0.04];
					\draw[fill] (1.5,0) circle [radius = 0.04];
					\draw[fill] (2.25,0) circle [radius = 0.04];
					\draw[fill] (1.5,-.75) circle [radius = 0.04];
					
					\draw (.75,0)--(1.5,0)--(2.25,0);
					\draw (.75,0)--(1.5,-.75);
					\draw (2.25,0)--(1.5,-.75);
					
					\node[above] at (.75,.75) {$~$};
					\node[above] at (.75,0) {$x_3$};
					\node[above] at (1.5,0) {$x_5$};
					\node[above] at (2.25,0) {$x_6$};
					\node[below] at (1.5,-.75) {$x_7$};
				\end{tikzpicture}
				\caption*{The graph of $G_1$}
			\end{center}
		\end{subfigure}
		\begin{subfigure}{.25\textwidth}
			\begin{center}
				\begin{tikzpicture}[x = 2cm, y = 1.5cm]
					\draw[fill] (1.5,-.75) circle [radius = 0.04];
					
					\node[above] at (.75,.75) {$~$};
					
					\node[below] at (1.5,-.75) {$x_7$};
				\end{tikzpicture}
				\caption*{The graph of $G_2$}
			\end{center}
		\end{subfigure}
		\caption{A visual representation of the graph $G$ and the intermediate graphs $G_1$ and $G_2$ obtained during the process}\label{Gi-process}
	\end{figure}
\end{example}

\begin{lemma}\label{max-process}
	Let $G$ be a graph and $F=\{x_1,\dots,x_r\}$ be an independent set of $G$. Then $G_r=\emptyset$ if and only if $F$ is a maximal independent set of $G$.
\end{lemma}
\begin{proof}
	By Definition \ref{graph G_i}, we have $G_r=\dis_{G_{r-1}}(x_r)=\dis_G(F)$. Therefore, $G_r=\emptyset$ if and only if $\dis_G(F)=\emptyset$ if and only if $N_G[F]=V(G)$ by Lemma \ref{max ind}. Thus, the assertion follows from Lemma \ref{max ind}.
\end{proof}

\begin{lemma}\label{deg neighbourhood}
	Let $G$ be a graph and $F=\{x_1,\dots,x_r\}$ is a maximal independent set of $G$. Then $ |N_{G}(F)|=\sum_{i=1}^{r}\deg_{G_{i-1}}(x_i).$
\end{lemma}
\begin{proof}
	Using the construction of $G_i$ for $1\leq i\leq r$ and applying Lemma \ref{max-process}, we can deduce that $G_r$ is empty. Therefore, by Lemma \ref{max ind} we have $V(G)=N_{G}(x_1)\cup N_{G_1}(x_2)\cup\dots\cup N_{G_{r-1}}(x_r)\cup F$. By comparing this with Lemma \ref{max ind}, we get $N_G(F)=\bigcup_{i=1}^{r}N_{G_{i-1}}(x_i)$. Thus, we can conclude that $|N_G(F)|=\sum_{i=1}^{r}\deg_{G_{i-1}}(x_i)=n-r$.
\end{proof}

\begin{remark}
	We note that $F\in\mathcal{F}(\Delta_G)$ has maximum (minimum) cardinality if and only if $|N_G(F)|$ is minimum (maximum).
\end{remark}

\begin{definition}
	For $1\leq i\leq r$, a \emph{max-process} is a procedure that produces an  ordered independent subset $F$ of $V(G)$   as   follows:
	\begin{enumerate}
		\item[i)] $F_1=\{x_1\in V(G)\mid\deg_G(x_1)\text{ is maximum}\}\text{ and }G_1=\dis_{G}(x_1)$,
		\item[ii)] $F_i=F_{i-1}\bigcup\left\{x_i\in V(G)\mid \deg_{G_{i-1}}(x_i)\text{ is maximum}\right\}\text{ and }G_i=\dis_{G_{i-1}}(x_i)$,
		\item[iii)] $F=F_r$.
	\end{enumerate}
\end{definition}

\begin{remark}
	We can note from Lemma \ref{max-process} that the independent set $F$ produced by the max-process is a maximal independent set of $G$ if and only if $G_r=\emptyset$.
\end{remark}

\begin{example}\label{example 3}
	Let $G$ be the graph shown in Figure \ref{fig8}. We observe that for $i=2,3,4,5,6$, $\deg_G(x_i)$ is maximum in $G$.
	
	If we choose $x_3$, then $F_1=\{x_3\}$ and $G_1=\dis_{G}(x_3)$. As a result, the vertices $x_1$ and $x_6$ have degree zero in $G_1$. We have two choices for the second step $v_2$, let choose $x_1$. Then $F_2=\{x_3,x_1\}$ and $G_2=\dis_{G_{1}}(x_1)$. Finally, $F=\{x_3,x_1,x_6\}$ and $G_3=\emptyset$. Therefore, according to Lemma \ref{max-process}, $F$ is a maximal independent set of $G$.
	\begin{figure}[ht]
		\begin{subfigure}{.35\textwidth}
			\begin{center}
				\begin{tikzpicture}[x = 2cm, y = 1.5cm]
					\draw[fill] (0,0) circle [radius = 0.04];
					\draw[fill] (.75,.75) circle [radius = 0.04];
					\draw[fill] (.75,0) circle [radius = 0.04];
					\draw[fill] (.75,-.75) circle [radius = 0.04];
					\draw[fill] (1.5,0) circle [radius = 0.04];
					\draw[fill] (2.25,0) circle [radius = 0.04];
					\draw[fill] (1.5,-.75) circle [radius = 0.04];
					
					\draw (0,0)--(.75,.75)--(.75,0)--(.75,-.75)--(0,0);
					\draw (.75,.75)--(1.5,0)--(.75,-.75);
					\draw (.75,0)--(1.5,0)--(2.25,0);
					\draw (.75,.75)--(2.25,0);
					\draw (.75,-.75)--(2.25,0);
					\draw (.75,0)--(1.5,-.75);
					\draw (2.25,0)--(1.5,-.75);
					
					\node[left] at (0,0) {$x_1$};
					\node[above] at (.75,.75) {$x_2$};
					\node[left] at (.75,0) {$x_3$};
					\node[below] at (.75,-.75) {$x_4$};
					\node[above right=-.08] at (1.5,0) {$x_5$};
					\node[right] at (2.25,0) {$x_6$};
					\node[below] at (1.5,-.75) {$x_7$};
				\end{tikzpicture}
				\caption*{The graph $G$}
			\end{center}
		\end{subfigure}
		\begin{subfigure}{.37\textwidth}
			\begin{center}
				\begin{tikzpicture}[x = 2cm, y = 1.5cm]
					\draw[fill] (0.75,0) circle [radius = 0.04];
					\draw[fill] (2.25,0) circle [radius = 0.04];
					
					\node[below] at (0.75,0) {$x_1$};
					\node[below] at (2.25,0) {$x_6$};
				\end{tikzpicture}
				\caption*{The graph of $G_1$}
			\end{center}
		\end{subfigure}
		\begin{subfigure}{.25\textwidth}
			\begin{center}
				\begin{tikzpicture}[x = 2cm, y = 1.5cm]
					\draw[fill] (2.25,0) circle [radius = 0.04];
					
					\node[below] at (2.25,0) {$x_6$};
				\end{tikzpicture}
				\caption*{The graph of $G_2$}
			\end{center}
		\end{subfigure}
		\caption{The Graph $G$ and Intermediate Graphs $G_1$ and $G_2$ during the Process}\label{fig8}
	\end{figure}
\end{example}

The \emph{big height} $\bight I(G)$ of an edge ideal $I(G)$ is the maximum cardinality of minimal vertex covers of $G$, equivalently, the cardinality of a set whose complement of a maximal independent set of minimum cardinality.

Moray and Villarreal in Corollary 3.33 \cite{MoreyVillarreal2012} established a lower bound for the projective dimension of $\K[\Delta_G]$, which states that for any graph $G$ we have $\pdim\K[\Delta_G]\geq\bight I(G),$ the equality holds only if $\K[\Delta_G]$ is sequentially Cohen--Macaulay. The next theorem provides a lower bound for the big height of the edge ideal of any graph and this bounded is sharp.

\begin{theorem}\label{pdim-max}
	Let $G$ be a graph. Then $\displaystyle\pdim\K[\Delta_G]\geq\bight I(G)\geq\max_{1\leq i \leq n}\{\deg_G(x_i)\}.$
\end{theorem}
\begin{proof}
	Let $F=\{x_1,\dots,x_r\}$ be a maximal independent set  produced by max-process. Then, the cardinality of $F$ is between the minimum and maximum cardinalities of any maximal independent sets in $G$, denoted by $s$ and $d$, respectively, where $\dim \K[\Delta_G]=d$. Let $x_1$ be the vertex chosen in the first step of the max-process, which has maximum degree in $G$, $\max_{1\leq i \leq n}\{\deg_G(x_i)\}=\deg_{G}(x_1)$. Using Lemma \ref{deg neighbourhood}, we have $\deg_G(x_1)\leq\sum_{i=1}^{r}\deg_{G_{i-1}}(x_i)=|N_G(F)|=n-r$ since $F$ is a maximal independent set of $G$.  By applying {\cite[Corollary 3.33]{MoreyVillarreal2012}}, we obtain $\pdim\K[\Delta_G]\geq\bight I(G)=n-s\geq n-r\geq\max_{1\leq i \leq n}\{\deg_G(x_i)\}$.
\end{proof}

Now, we give an example which shows the difference between the projective dimension and a maximum degree of a graph may not be bounded in general. The following inequality is well known (see {\cite[Corollary B.4.1]{Vasconcelos1998}})
\begin{equation}\label{eq1}
	\deg P_{\K[\Delta_G]}(t)-\reg\K[\Delta_G]\leq\dim\K[\Delta_G]-\dep\K[\Delta_G]
\end{equation}
and the equality holds if $\K[\Delta_G]$ has 2-linear resolution. Hence the equality holds in Equation \eqref{eq1} if $G$ is a co-chordal graph.

The \emph{$r$-barbell} graph is a chordal graph obtained by connecting two copies of a complete graph $\mathcal{K}_r$ by a bridge, for example see Figure \ref{fig1}.
\begin{figure}[ht]
	\begin{center}
		\begin{tikzpicture}[x = 2cm, y = 1.5cm]
			\draw[fill] (0,0) circle [radius = 0.04];
			\draw[fill] (.75,1) circle [radius = 0.04];
			\draw[fill] (1.5,0) circle [radius = 0.04];
			\draw[fill] (.75,-1) circle [radius = 0.04];
			\draw[fill] (2.25,0) circle [radius = 0.04];
			\draw[fill] (3,1) circle [radius = 0.04];
			\draw[fill] (3.75,0) circle [radius = 0.04];
			\draw[fill] (3,-1) circle [radius = 0.04];
			
			\draw (0,0)--(.75,1)--(1.5,0)--(.75,-1)--(0,0);
			\draw (0,0)--(1.5,0);
			\draw (.75,1)--(.75,-1);
			\draw (2.25,0)--(3,1)--(3.75,0)--(3,-1)--(2.25,0);
			\draw (1.5,0)--(2.25,0);
			\draw (2.25,0)--(3.75,0);
			\draw (3,1)--(3,-1);
			
			\node[above right=-.05] at (1.5,0) {$x_1$};
			\node[above left=-.05] at (2.25,0) {$x_2$};
		\end{tikzpicture}
	\end{center}
	\caption{The $4$-barbell graph}
	\label{fig1}
\end{figure}

For $r\geq3$, let $G$ be the complement of the $r$-barbell graph. Then the independence complex of $G$ is $\Delta_G=\lr{V(\mathcal{K}_r),V(\mathcal{K}_r),\{x_1,x_2\}}$ on $|V(G)|=2r$. The $f$-vector of $\Delta_G$ is $\left(1,2\binom{r}{1},2\binom{r}{2}+1,2\binom{r}{3},2\binom{r}{4},\dots,2\right)$ and $\dim\K[\Delta_G]=r$. From Equation \eqref{eq h_i}, one can find that $h_{r}=0$ and $h_{r-1}=(-1)^{r-1}(-2)$ after some simple calculations. Hence, we have $h_{r-1}\neq0$ and so the degree of the Hilbert polynomial of $\K[\Delta_G]$ is $\deg P_{\K[\Delta_G]}(t)=r-1$. Thus, the equality in Equation \eqref{eq1} gives $\dep\K[\Delta_G]=r-(r-1)+1=2$. The Auslander--Buchsbaum formula, \cite[Theorem 3.1]{AuslanderBuchsbaum1957}, $\pdim\K[\Delta_G]+\dep\K[\Delta_G]=n$ implies $\pdim\K[\Delta_G]=2r-2$. Note that $\max_{1\leq i \leq 2r}\left\{\deg_{G}(x_i)\right\}=r$. Therefore,
$$\displaystyle\pdim\K[\Delta_G]-\displaystyle\max_{1\leq i \leq 2r}\left\{\deg_{G}(x_i)\right\}=r-2.$$

\section{The projective dimension and maximum degree is equal for certain of classes of graphs}

In the rest of this work, we provide two different sufficient conditions on $G$ for which $\pdim\K[\Delta_G]=\max_{1\leq i \leq n}\{\deg_G(x_i)\}.$ The first class of graphs for which the equality holds is the class of graphs with a  full-vertex. A vertex $x\in V(G)$ is called a \emph{full-vertex} if $N_G[x]=V(G)$. A full-vertex is also known  as a universal vertex or a dominating vertex.

\begin{theorem}\label{pdim-G1-empty}
	Let $G$ be a graph with a full-vertex $x$. Then $$\displaystyle\pdim\K[\Delta_G]=\max_{1\leq i \leq n}\{\deg_G(x_i)\}=n-1.$$
\end{theorem}
\begin{proof}
	Lemma \ref{max-process} implies that $F=\{x\}$ is a maximal independent set of minimum cardinality. Using Lemma \ref{deg neighbourhood}, we have $|N_G(x)|=\deg_{G}(x)=\max_{1\leq i \leq n}\{\deg_{G}(x_i)\}=n-1$. Therefore, by Theorem \ref{pdim-max}, we obtain $\pdim\K[\Delta_G]\geq\bight I(G)=n-1$. We now claim that $\pdim\K[\Delta_G]=n-1$. If it were not the case, assume $\pdim\K[\Delta_G]=n$. Then the Auslander--Buchsbaum formula implies that $\dep\K[\Delta_G]=0$. Note that the maximal ideal $\displaystyle\mm$ is an associated prime of $\K[\Delta_G]$  if and only if $\dep\K[\Delta_G]=0$. It follows that $\bight I(G)=n$ which is a contradiction. Hence $\pdim\K[\Delta_G]=\max_{1\leq i \leq n}\{\deg_{G}(x_i)\}=n-1$.
\end{proof}

\begin{example}
	We consider some graphs with a full-vertex. 
	\begin{itemize}
		\item Since every vertex in $\mathcal{K}_n$ is connected to all other vertices, each vertex is a full-vertex. Therefore, by the result established earlier, we have $\pdim\K[\Delta{\mathcal{K}_n}]=n-1$.
		
		\item The \emph{wheel graph} $W_{n}$ is a graph obtained by connecting a vertex $x$ to all vertices of an induced cycle with $n-1$ vertices, $W_5$ is shown in Figure \ref{fig4}. Since the vertex $x$ in $W_n$ is a full-vertex, we have $\pdim\K[\Delta_{W_n}]=n-1$.
		
		\item A \emph{star complete graph} is obtained by attaching complete graphs  to a single vertex $x$, in each case we attach all the vertices  of the complete graphs to $x$. For example, a bowtie graph can be seen as a star complete graph since it is obtained by attaching two copies of $\mathcal{K}_2$ to a vertex $x$. Then the projective dimension of a star complete graph with $n$ vertices is $n-1$.
		\begin{figure}[ht]
			\begin{center}
				\begin{tikzpicture}[x = 2cm, y = 1.5cm]
					\draw[fill] (0,0) circle [radius = 0.04];
					\draw[fill] (1,0) circle [radius = 0.04];
					\draw[fill] (1.3,.8) circle [radius = 0.04];
					\draw[fill] (.5,1.5) circle [radius = 0.04];
					\draw[fill] (-.3,.8) circle [radius = 0.04];
					\draw[fill] (.5,.7) circle [radius = 0.04];
					\draw[fill] (1.8,0) circle [radius = 0.04];
					
					\draw (0,0)--(.5,.7)--(-.3,.8)--(0,0);
					\draw (1,0)--(.5,.7)--(1.3,.8)--(1.8,0)--(1,0);
					\draw (1.3,.8)--(1,0);
					\draw (.5,1.5)--(.5,.7);
					\draw (1.8,0)--(.5,.7);
				\end{tikzpicture}
				\caption{A star complete graph}\label{fig10}
			\end{center}
		\end{figure}
	\end{itemize}
\end{example}

\begin{remark}
	Let $n\geq6$. Note that $W_n$ is not a co-chordal graph. Therefore, $\K[\Delta_{W_n}]$ does not have a 2-linear resolution. However, we still have $\pdim\K[\Delta_{W_n}]=\max_{1\leq i \leq n}{\deg_{W_n}(x_i)}$.
\end{remark}

The second class of graphs for which the equality holds is the class of graphs whose their complement are $(d_1,d_2,\dots,d_q)$-trees. To our knowledge, this condition along with Theorem \ref{pdim-G1-empty} generalise all the existing classes of graphs for which the equality holds.

\begin{theorem}\label{d-tree-pdim}
Let $G$ be a connected graph such that $\overline{G}$ is a $(d_1,d_2,\dots,d_q)$-tree. Then $\pdim\K[\Delta_G]=\displaystyle\max_{1\leq i \leq n}\{\deg_G(x_i)\}$.
\end{theorem}
\begin{proof}
Let $F_i=V(\mathcal{K}_{d_i})$ for $i=1,\dots,q$. Then $\Delta_G=\lr{F_1,F_2,\dots,F_q}$ is the clique complex of $\overline{G}$. By the definition of a $(d_1, d_2, \dots, d_q)$-tree, there exists a free vertex $x$ in $F_q$ and $\bight I(G)=n-|F_q|$. Note that $ \max_{1\leq i \leq n}\{\deg_G(x_i)\}=\deg_{G}(x)$ because $x$ is a free vertex of $F_q$, and $F_q$ has the minimum cardinality. Let $F_q=\{x,x_2,\dots,x_r\}$. Using Lemmas \ref{max ind} and \ref{deg neighbourhood}, we obtain $\bight I(G)=n-|F_q|=|N_G[F_q]|-|F_q|=|N_G(F_q)|=\deg_{G}(x)+\sum_{i=2}^{r}\deg_{G_{i}}(x_i)$. We have $\dis_G(x)$ is a set of isolated vertices since $x$ is a free vertex in $\Delta_G$. Thus, $\sum_{i=2}^{r}\deg_{G_{i-1}}(x_i)=0$, and $\bight I(G)=\deg_{G}(x)$. Theorem \ref{d-tree-VD} and \cite[Corollary 3.33]{MoreyVillarreal2012} imply that $\pdim\K[\Delta_G]=\bight I(G)=\deg_{G}(x)$.
\end{proof}

\begin{remark}
	It can be observed that if $G$ is the complement of a $(d_1,d_2,\dots,d_q)$-tree, then applying the Auslander--Buchsbaum formula and Theorem \ref{d-tree-pdim} yields $\dep\K[\Delta_G]=d_q$.
\end{remark}

\begin{example}\label{example 2}
	Consider the graphs $G_1$ and $G_2$ in Figure \ref{fig3}. The complement of $G_1$ is a $(2,2,1)$-tree, and the complement of $G_2$ is a $(4,3,3,3,3)$-tree. Then $\pdim\K[\Delta_{G_1}]=3$ $\dep\K[\Delta_{G_1}]=1$. Similarly, $\pdim\K[\Delta_{G_2}]=5$ and $\dep\K[\Delta_{G_2}]=3$.
	\begin{figure}[ht]
		\begin{subfigure}{.45\textwidth}
			\begin{center}
				\begin{tikzpicture}[x = 2cm, y = 1.5cm]
					\draw[fill] (0,0) circle [radius = 0.04];
					\draw[fill] (.5,1) circle [radius = 0.04];
					\draw[fill] (1,0) circle [radius = 0.04];
					\draw[fill] (2,0) circle [radius = 0.04];
					
					\draw (0,0)--(.5,1)--(1,0)--(0,0);
					\draw (1,0)--(2,0);
				\end{tikzpicture}
				\caption*{The graph $G_1$}
			\end{center}
		\end{subfigure}
		\begin{subfigure}{.45\textwidth}
			\begin{center}
				\begin{tikzpicture}[x = 2cm, y = 1.5cm]
					\draw[fill] (0,0) circle [radius = 0.04];
					\draw[fill] (.7,0) circle [radius = 0.04];
					\draw[fill] (.7,.7) circle [radius = 0.04];
					\draw[fill] (0,.7) circle [radius = 0.04];
					\draw[fill] (1.25,.35) circle [radius = 0.04];
					\draw[fill] (-.55,.35) circle [radius = 0.04];
					\draw[fill] (.35,-.7) circle [radius = 0.04];
					\draw[fill] (.35,1.4) circle [radius = 0.04];
					
					\draw (0,0)--(.7,0)--(.7,.7)--(0,.7)--(0,0);
					\draw (.7,0)--(1.25,.35)--(.7,.7)--(.7,0);
					\draw (0,0)--(-.55,.35)--(0,.7)--(0,0);
					\draw (0,.7)--(.35,1.4)--(.7,.7)--(.7,0);
					\draw (0,0)--(.35,-.7)--(.7,0)--(0,0);
					\draw (0,0)--(.7,.7);
					\draw (0,.7)--(.7,0);
				\end{tikzpicture}
				\caption*{The graph $G_2$}
			\end{center}
		\end{subfigure}
		\caption{Graphs $G_1$ and $G_2$ demonstrating $\pdim$ values}\label{fig3}
	\end{figure}
\end{example}

To demonstrate that the converse of Theorem \ref{d-tree-pdim} is not true in general, we can consider the graph $G$ described in Figure \ref{fig4}. We use Theorem \ref{pdim-G1-empty} to determine that $\pdim\K[\Delta_G]= \max_{1\leq i \leq 5}{\deg_G(x_i)}=4$, since $x$ is a full-vertex of $G$. However, we can conclude that the converse of Theorem \ref{d-tree-pdim} does not hold for $G$ since $\overline{G}$ is not a $(d_1,d_2,\dots,d_q)$-tree.
\begin{figure}[ht]
	\begin{subfigure}{.45\textwidth}
		\begin{center}
			\begin{tikzpicture}[x = 2cm, y = 1.5cm]
				\draw[fill] (0,0) circle [radius = 0.04];
				\draw[fill] (0,1) circle [radius = 0.04];
				\draw[fill] (1,1) circle [radius = 0.04];
				\draw[fill] (1,0) circle [radius = 0.04];
				\draw[fill] (.5,.5) circle [radius = 0.04];
				
				\draw (0,0)--(0,1)--(1,1)--(1,0)--(0,0);
				\draw (.5,.5)--(1,1)--(.5,.5)--(1,0);
				\draw (0,1)--(.5,.5);
				\draw (.5,.5)--(0,0);
				
				\node[below=.05] at (.5,.5) {$x$};
			\end{tikzpicture}
			\caption*{The graph $G$}
		\end{center}
	\end{subfigure}
	\begin{subfigure}{.4\textwidth}
		\begin{center}
			\begin{tikzpicture}[x = 2cm, y = 1.5cm]
				\draw[fill] (0,0) circle [radius = 0.04];
				\draw[fill] (0,1) circle [radius = 0.04];
				\draw[fill] (1,0) circle [radius = 0.04];
				\draw[fill] (1,1) circle [radius = 0.04];
				\draw[fill] (.5,.5) circle [radius = 0.04];
				
				\draw (0,0)--(0,1);
				\draw (1,0)--(1,1);
				%
				\node[below] at (.5,.5) {$x$};
			\end{tikzpicture}
			\caption*{The graph of $\overline{G}$}
		\end{center}
	\end{subfigure}
	\caption{A counterexample graph for the converse of Theorem \ref{d-tree-pdim}}\label{fig4}
\end{figure}

The following corollary demonstrates that the theorem of Moradi and Kiani, which states that the projective dimension of $\K[\Delta_G]$ is equal to the maximum degree of vertices of $G$ whenever $\overline{G}$ is a $d$-tree \cite[Theorem 2.13]{MoradiKiani2010}, is a direct consequence of Theorem \ref{d-tree-pdim}.

\begin{corollary}[{\cite[Theorem 2.13]{MoradiKiani2010}}]
	Let $G$ be a graph such that $\overline{G}$ is a $d$-tree. Then $\pdim\K[\Delta_G]=\max_{1\leq i \leq n}\left\{\deg_{G}(x_i)\right\}$.
\end{corollary}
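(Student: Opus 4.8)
The plan is to obtain this as an immediate consequence of Theorem \ref{d-tree pdim}; there is essentially nothing new to prove, only an inclusion of hypotheses to record. First I would recall the observation made right after Definition \ref{d-tree definition}: comparing Fr\"oberg's inductive construction of a $d$-tree with Definition \ref{d-tree definition}, a $d$-tree is precisely a $(d_1,d_2,\dots,d_q)$-tree for which $d_1=d_2=\dots=d_q=d$ (this constant sequence is non-increasing, hence admissible in Definition \ref{d-tree definition}). Consequently, if $\overline{G}$ is a $d$-tree, then $\overline{G}$ is a $(d_1,d_2,\dots,d_q)$-tree, and applying Theorem \ref{d-tree pdim} to $G$ yields $\pdim\K[\Delta_G]=\max_{1\leq i\leq n}\{\deg_G(x_i)\}$, which is exactly the assertion of the corollary.

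The only point that deserves a word is the connectedness hypothesis appearing in Theorem \ref{d-tree pdim}, since the complement of a $d$-tree need not be connected (for instance $\overline{G}=\mathcal{K}_d$ gives an edgeless $G$). This is harmless: as in \cite{MoradiKiani2010} one may restrict to connected $G$, or one simply observes that the proof of Theorem \ref{d-tree pdim} does not actually use connectedness anywhere. That proof only uses that $\overline{G}$ is a $(d_1,d_2,\dots,d_q)$-tree in order to exhibit a free vertex $x$ of $\Delta_G$ lying in the minimum facet $F_q$, the string of equalities $\bight I(G)=n-|F_q|=\deg_G(x)=\max_{1\leq i\leq n}\{\deg_G(x_i)\}$, and the sequential Cohen--Macaulayness of $\Delta_G$ furnished by Theorem \ref{d-tree VD}, which via \cite[Corollary 3.33]{MoreyVillarreal2012} forces $\pdim\K[\Delta_G]=\bight I(G)$. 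Hence the conclusion holds verbatim for every graph whose complement is a $(d_1,d_2,\dots,d_q)$-tree, in particular for $d$-trees.

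I do not anticipate any genuine obstacle here: the substantive work is already carried out in establishing the chain $d\text{-trees}\Rightarrow(d_1,d_2,\dots,d_q)\text{-trees}\Rightarrow\text{chordal graphs}$ together with Theorems \ref{d-tree VD} and \ref{d-tree pdim}, and this corollary merely specializes the first implication of that chain and feeds it into the second theorem. The write-up should therefore be only a couple of lines, consisting of the remark that a $d$-tree is the equal-parameter case of a $(d_1,d_2,\dots,d_q)$-tree, followed by a direct citation of Theorem \ref{d-tree pdim}.
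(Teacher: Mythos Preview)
Your approach is essentially the paper's: both observe that a $d$-tree is a $(d_1,\dots,d_q)$-tree with constant sequence $d_1=\dots=d_q=d$ and then invoke Theorem \ref{d-tree pdim}. The only difference is how the connectedness hypothesis of Theorem \ref{d-tree pdim} is handled. The paper disposes of it by asserting that $G$ is connected whenever $\overline{G}$ is a $d$-tree; this assertion is in fact false (for instance, if $\overline{G}$ is the $3$-tree on four vertices consisting of two triangles sharing an edge, then $G$ is a single edge together with two isolated vertices). Your treatment---noting that the argument proving Theorem \ref{d-tree pdim} nowhere uses connectedness of $G$---is the sound way to close this gap, and the conclusion of the corollary does hold without any connectedness assumption.
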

\begin{proof}
	Corollary \ref{d-tree pure VD} implies that any $d$-tree graph is an unmixed $(d_1,d_2,\dots,d_q)$-tree. Since $\overline{G}$ is a $d$-tree, $G$ is connected. Therefore, the result follows from Theorem \ref{d-tree-pdim}.
\end{proof}

Gitler and Valencia defined in \cite{GitlerValencia2005} that for integers $m\geq 1$ and $r\geq 0$, the graph $G_{m,r}$ can be obtained by attaching $r$ edges to each vertex of the complete graph $\mathcal{K}_m$. They showed that the projective dimension of the Stanley--Reisner ring $\K[\Delta{G_{m,r}}]$ is equal to the maximum degree of the vertices in $G_{m,r}$, where $n=m(r+1)$ is the number of vertices in $G_{m,r}$. Theorem \ref{d-tree-pdim} generalizes this result, as the complement of $G_{m,r}$ is a $(mr, \underbrace{(m-1)r+1,(m-1)r+1,\dots,(m-1)r+1}_{m\textup{-times}})$-tree. 

\begin{corollary}
	[{\cite[Proposition 4.9 and Proposition 4.12]{GitlerValencia2005}}]
	Let $G=G_{m,r}$ or let $G=G_{m,i_1,\dots,i_m}$ be the graph obtained by attaching $i_j$ edges at each vertex $x_j$ of $\mathcal{K}_m$ such that $i_j\leq i_{j+1}$ for all $j$. Then $\pdim\K[\Delta_G]=\displaystyle\max_{1\leq i \leq n}\{\deg_{G}(x_i)\}$.
\end{corollary}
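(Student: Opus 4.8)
The plan is to reduce both families to one combinatorial statement and then quote Theorem~\ref{d-tree pdim}. Observe first that $G_{m,r}$ is the case $i_1=\dots=i_m=r$ of $G_{m,i_1,\dots,i_m}$, so it is enough to treat $G=G_{m,i_1,\dots,i_m}$; if some $i_j=0$ then either $G=\mathcal K_m$ has a full vertex and Theorem~\ref{pdim G1 empty} applies, or that pendant-free clique vertex can be ignored, so I may assume $i_j\geq 1$ for all $j$. Write $I=\sum_{j=1}^m i_j$, so $n=m+I$, and let $x_1,\dots,x_m$ be the vertices of $\mathcal K_m$, with $x_j$ carrying $i_j$ pendant leaves. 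The first step is to read off the facets of $\Delta_G$, i.e. the maximal independent sets of $G$. Since $x_1,\dots,x_m$ form a clique, an independent set contains at most one of them: if it contains none, the largest such set is the set $F_0$ of all $I$ leaves, $|F_0|=I$; if it contains $x_j$, it can contain neither $x_j$'s own leaves nor any other $x_k$, but may contain all remaining leaves, giving $F_j=\{x_j\}\cup(\text{leaves not attached to }x_j)$ with $|F_j|=1+I-i_j$. Hence $\Delta_G=\lr{F_0,F_1,\dots,F_m}$.

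The key step is to exhibit $\overline{G}$ as a $(d_1,\dots,d_{m+1})$-tree. Order the facets $F_0,F_1,\dots,F_m$ and put $d_1:=|F_0|=I$ and $d_{j+1}:=|F_j|=1+I-i_j$ for $1\leq j\leq m$; because $i_1\leq\dots\leq i_m$ and $i_1\geq 1$, the sequence $d_1\geq d_2\geq\dots\geq d_{m+1}$ is non-increasing and positive. For each $j\geq 1$ we have $F_j\cap F_0=\{\text{leaves not attached to }x_j\}$, a subset of the clique $F_0$ of $\overline{G}$ of cardinality $I-i_j=d_{j+1}-1$, while $F_j\setminus F_0=\{x_j\}$ is a single vertex occurring in no $F_k$ with $k<j$. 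Therefore, starting from $H_1=\mathcal K_{d_1}$ on $F_0$ and successively gluing $\mathcal K_{d_{j+1}}$ (on $F_j$) to the previous complex along the common clique $\mathcal K_{d_{j+1}-1}$ on $F_j\cap F_0$ reproduces exactly the clique complex $\lr{F_0,\dots,F_m}=\Delta_G=\Delta(\overline{G})$, so $\overline{G}$ is a $(d_1,\dots,d_{m+1})$-tree (for $G_{m,r}$ this gives the sequence $(mr,(m-1)r+1,\dots,(m-1)r+1)$ recorded before the statement).

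Finally, $G$ is connected, being $\mathcal K_m$ with pendant edges, so Theorem~\ref{d-tree pdim} gives $\pdim\K[\Delta_G]=\max_{1\leq i\leq n}\{\deg_G(x_i)\}$, which is the claim; here the free vertex of the last facet $F_m$ is $x_m$, consistent with $\max_{1\leq i\leq n}\{\deg_G(x_i)\}=\deg_G(x_m)=(m-1)+i_m$. The only genuine work lies in the middle paragraph: one must check that the iterated gluing along $F_j\cap F_0$ introduces no edge of $\overline{G}$ beyond those forced by the facets, which amounts to bookkeeping with the pendant leaves, and one must watch the degenerate cases $m=1$ and $i_j=0$. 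Everything else is a direct appeal to Theorem~\ref{d-tree pdim}.
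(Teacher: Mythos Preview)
Your proposal is correct and follows the same route the paper takes: identify $\overline{G}$ as a $(d_1,\dots,d_q)$-tree and invoke Theorem~\ref{d-tree pdim}. The paper itself gives no separate proof of this corollary beyond recording, just before the statement, that $\overline{G_{m,r}}$ is a $(mr,(m-1)r+1,\dots,(m-1)r+1)$-tree; you supply the details the paper omits, including the explicit facet description $\Delta_G=\lr{F_0,F_1,\dots,F_m}$ and the verification that each $F_j$ glues onto $F_0$ along a $\mathcal{K}_{d_{j+1}-1}$, and you extend the computation to the non-uniform case $G_{m,i_1,\dots,i_m}$. One small point: your treatment of the case where some $i_j=0$ (``that pendant-free clique vertex can be ignored'') is imprecise---removing such a vertex changes $G$---but the issue is easily repaired: if $i_1=\dots=i_k=0<i_{k+1}$ then $F_0\subseteq F_1$, so the facets are $F_1,\dots,F_m$, and the same gluing argument with $F_1$ as the base clique (or $k$ initial gluings of equal-size cliques along all leaves) still exhibits $\overline{G}$ as a $(d_1,\dots,d_m)$-tree.
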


	A \emph{separating set} in a connected graph $G$ is a set of edges whose deletion turns $G$ into a disconnected graph. If $G$ is connected and not a complete graph, its \emph{edge connectivity} $\lambda(G)$ is the size of the smallest separating set in $G$. A graph $G$ is $k$-connected if $\lambda(G)\geq k$.
	
	\begin{lemma}\label{k-connected}
		For $q\geq2$, let $G$ be a connected $(d_1,d_2,\dots,d_q)$-tree. Then $G$ is $d_q-1$-connected.
	\end{lemma}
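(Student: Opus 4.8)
The plan is to prove the stronger assertion that the \emph{vertex} connectivity satisfies $\kappa(G)\ge d_q-1$; since $\kappa(G)\le\lambda(G)$ by Whitney's inequality, this simultaneously yields the edge-connectivity bound matching the paper's definition. I would argue by induction on the length $q$ of the defining sequence, peeling off the vertex created at the last construction step. Fix a construction $\mathcal{K}_{d_1}=H_1\subseteq\cdots\subseteq H_q=G$ as in Definition~\ref{d-tree definition}, and let $v$ be the unique vertex of $V(H_q)\setminus V(H_{q-1})$. By the gluing rule $H_q=H_{q-1}\cup_{\mathcal{K}_{d_q-1}}\mathcal{K}_{d_q}$, the vertex $v$ is adjacent exactly to the $d_q-1$ vertices of the clique $C=\mathcal{K}_{d_q-1}$ along which the last clique is attached, so $N_G(v)=C$ is a clique and $v$ is simplicial. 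A simplicial vertex is never a cut vertex, hence $G-v=H_{q-1}$ is a connected $(d_1,\dots,d_{q-1})$-tree, to which the inductive hypothesis applies.

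The base case $q=1$ is $G=\mathcal{K}_{d_1}$, which is $(d_1-1)$-connected. For the inductive step I would take an arbitrary vertex set $X$ with $|X|<d_q-1$ and show that $G-X$ is connected; together with $|V(G)|=d_1+(q-1)>d_q-1$ this establishes $(d_q-1)$-connectivity. The inductive hypothesis gives $\kappa(H_{q-1})\ge d_{q-1}-1\ge d_q-1>|X|$, so $H_{q-1}-(X\setminus\{v\})$ is connected. If $v\in X$ then $G-X=H_{q-1}-(X\setminus\{v\})$ is connected. If $v\notin X$, then because $|X|<d_q-1=|C|$ some neighbour $c\in C\setminus X$ of $v$ survives, so $G-X$ is the connected graph $H_{q-1}-X$ with $v$ attached to $c$, hence connected. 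This closes the induction.

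The crux, and the only place the full force of the hypotheses is needed, is the identification of the last-added vertex as a simplicial vertex whose neighbourhood has size \emph{exactly} $d_q-1$, together with the observation that $d_q$ being the minimum of the non-increasing sequence makes $d_q-1$ the correct target: the remainder $H_{q-1}$ is strictly more connected, namely $\kappa(H_{q-1})\ge d_{q-1}-1\ge d_q-1$, while the single attaching clique $C$ is too large, $|C|=d_q-1$, to be destroyed by any cut of size below $d_q-1$. I would also record that under the standing hypotheses $q\ge2$ and $G$ connected one has $d_q\ge2$ (a final clique $\mathcal{K}_1$ would glue along the empty clique and create an isolated vertex, contradicting connectedness), so that $\deg_G(v)=d_q-1\ge1$ and the simplicial-vertex argument is non-vacuous.
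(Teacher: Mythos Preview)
Your argument is correct and takes a genuinely different route from the paper. The paper gives a two-line proof: it observes that the free vertex $x\in F_q$ has $\deg_G(x)=d_q-1$, then asserts $\lambda(G)=d_q-1$ ``since $F_q$ has the minimum cardinality and $G$ is a chordal graph'', and concludes. No justification is offered for the lower bound $\lambda(G)\ge d_q-1$; note that the implicit principle $\lambda=\delta$ fails for chordal graphs in general (the $r$-barbell of Section~\ref{section} is chordal with $\delta=r-1$ but $\lambda=1$), so whatever the authors have in mind must exploit the specific $(d_1,\dots,d_q)$-tree structure, though they do not spell this out.

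Your inductive peeling argument is different in kind: you establish the stronger bound $\kappa(G)\ge d_q-1$ on \emph{vertex} connectivity by removing the last simplicial vertex $v$ and checking that any separator of size below $d_q-1$ can neither disconnect $H_{q-1}$ (by the inductive hypothesis, using $d_{q-1}\ge d_q$) nor sever $v$ from its $(d_q-1)$-clique of neighbours; Whitney's inequality then delivers the edge-connectivity statement. What this buys you is a fully self-contained proof that does not rest on any unstated connectivity fact for chordal graphs, together with the sharper conclusion $\kappa(G)=\lambda(G)=\delta(G)=d_q-1$. The paper's approach, if fleshed out, would presumably be shorter, but as written yours is the more robust of the two.
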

	\begin{proof}
		Let $x$ be the free vertex in  the facet $F_q=V(\mathcal{K}_{d_q})$ of $\Delta(G)$. Then $\lambda(G)=d_q-1$ since $F_q$ has the minimum cardinality and $G$ is a chordal graph. Therefore $G$ is $d_q-1$-connected. 
	\end{proof}
	
	\begin{example}
		Consider the graphs $G_1$ and $G_2$ in Figure \ref{fig11}. Then $G_1$ is a $(3,2)$-tree and $G_2$ is a $(3,3)$-tree. Therefore, according to Lemma \ref{k-connected}, $G_1$ is 1-connected, while $G_2$ is 2-connected.
		\begin{figure}[ht]
			\begin{subfigure}{.45\textwidth}
				\begin{center}
					\begin{tikzpicture}[x = 2cm, y = 1.5cm]
						\draw[fill] (0,0) circle [radius = 0.04];
						\draw[fill] (.5,1) circle [radius = 0.04];
						\draw[fill] (1,0) circle [radius = 0.04];
						\draw[fill] (2,0) circle [radius = 0.04];
						
						\draw (0,0)--(.5,1)--(1,0)--(0,0);
						\draw (1,0)--(2,0);
					\end{tikzpicture}
					\caption*{The graph $G_1$}
				\end{center}
			\end{subfigure}
			\begin{subfigure}{.45\textwidth}
				\begin{center}
					\begin{tikzpicture}[x = 2cm, y = 1.5cm]
						\draw[fill] (0,0) circle [radius = 0.04];
						\draw[fill] (1,1) circle [radius = 0.04];
						\draw[fill] (1,0) circle [radius = 0.04];
						\draw[fill] (2,1) circle [radius = 0.04];
						
						\draw (0,0)--(1,1)--(1,0)--(0,0);
						\draw (1,0)--(1,1)--(2,1)--(1,0);
					\end{tikzpicture}
					\caption*{The graph $G_2$}
				\end{center}
			\end{subfigure}
			\caption{Graphs $G_1$ and $G_2$ illustrating their connectivity}\label{fig11}
		\end{figure}
	\end{example}
	
	To compute the projective dimension of the quasi-forest simplicial complex $\Delta_G$ for a disconnected graph $G$, we can split $G$ into a connected component and a union of totally disconnected graphs. Hence, after dropping the connectedness assumption in Theorem \ref{d-tree-pdim}, our formula for computing the projective dimension becomes
	\begin{equation*}
		\pdim\K[\Delta_G]=\displaystyle\max_{1\leq i \leq n}\{\deg_{G}(x_i)\}+\text{number of isolated vertices}.
	\end{equation*}
	
	For example, consider the graph $G$ shown in Figure \ref{fig9}. Note that $G$ is disconnected and has two connected components: $G'$ and the isolated vertex $x$. By Theorem \ref{d-tree-pdim}, we have $\pdim\K[\Delta_{G'}]=3$ since $\overline{G'}$ is a $(2,1,1)$-tree. Thus, by the formula given above, we have $\pdim\K[\Delta_G]=3+1=4$.\\
	\begin{figure}[ht]
		\begin{center}
			\begin{tikzpicture}[x = 2cm, y = 1.5cm]
				\draw[fill] (0,0) circle [radius = 0.04];
				\draw[fill] (1,1) circle [radius = 0.04];
				\draw[fill] (1,0) circle [radius = 0.04];
				\draw[fill] (0,1) circle [radius = 0.04];
				\draw[fill] (2,0) circle [radius = 0.04];
				
				\draw (0,0)--(1,0)--(1,1)--(0,1)--(0,0);
				\draw (1,0)--(0,1);
				
				\node[above] at (2,0) {$x$};
			\end{tikzpicture}
		\end{center}
		\caption{The graph $G$ with an isolated vertex}\label{fig9}
	\end{figure}

\textbf{Data availability.} Authors can confirm that all relevant data are included in the article.

\bibliographystyle{plain}
\bibliography{SCM_trees.bib}

\end{document}